\theoremstyle{plain}
\newtheorem{acknowledgement}{Acknowledgement}
\newtheorem{definition}{Definition}
\newtheorem{example}{Example}
\newtheorem{lemma}{Lemma}
\newtheorem{proposition}{Proposition}
\numberwithin{equation}{section}
\begin{document}
\title[\textbf{the best simultaneous approximation in linear 2-normed spaces}%
]{\textbf{The best simultaneous approximation in linear 2-normed spaces} }
\author[\textbf{M. Acikgoz}]{\textbf{Mehmet Acikgoz}}
\address{\textbf{University\ of\ Gaziantep Faculty\ of\ Science and\ Arts,\
Department of\ Mathematics 27310 Gaziantep,\ TURKEY }}
\email{\textbf{acikgoz@gantep.edu.tr}}
\date{\textbf{May 07, 2012}}
\subjclass[2000]{\textbf{Primary 46A15, Secondary 41A65}.}
\keywords{\textbf{2-normed space, best approximation, simultaneous best
approximation, 2-Banach space}.}

\begin{abstract}
In this paper, we shall investigate and analyse a new study on the best
simultaneous approximation in the context of linear 2-normed spaces inspired
by Elumalai and his coworkers in \cite{Elumalai1}. The basis of this
investigation is to extend and refinement the definition of the classical
aproximation, best approximation and some related concepts to linear
2-normed spaces.
\end{abstract}

\maketitle

\section{\textbf{Introduction}}

The problem of best and simultaneous best approximation has been studied by
several mathematicians (for more informations, see \cite{Cheney}, \cite%
{Cobzas}, \cite{Diaz1}, \cite{Diaz2}, \cite{Dunham}, \cite{Lorentz}, \cite%
{Goel1}, \cite{Goel2}, \cite{Singer}). Most of these works have dealt with
the existence, uniqueness and characterization of best approximations in
spaces of continuous functions with values in Banach spaces. Recently, many
works on approximation has been done on 2-structures such as 2-normed
spaces, generalized 2-normed spaces (for details, see \cite{Chen}, \cite%
{Lewandowska1}) and 2-Banach spaces (see \cite{Acikgoz2}, \cite{Acikgoz3}, 
\cite{Elumalai1}, \cite{Elumalai2}, \cite{Elumalai3}, \cite{Elumalai4}, \cite%
{Elumalai5}, \cite{Ehret}). Diaz and McLaughlin \cite{Diaz1} and Dunham \cite%
{Dunham} have considered simultaneously approximating of two real-valued
continuously functions $f_{1}$, $f_{2}$ defined on $[a,b]$, by elements of
set $C[a,b].$ Several results of best simultaneous approximation in the
context of linear space were obtained by Goel \textit{et al. }(for details,
see \cite{Goel1}, \cite{Goel2}). The subject of approximation theory has
attracted the attention of several mathematicians during the last 130 years
or so. This theory is now an extremely extensive branch of mathematical
analysis. It has many applications in many areas, especially in engineering.

The concept of linear 2-normed spaces has been investigated by Gahler in
1965 \cite{Gahler} and given many important properties and examples for
these spaces. After, these spaces have been developed extensively in
different subjects by other researchers from many points of view and then
the field has considerably grown. Z. Lewandowska published some of papers on
2-normed sets and generalized 2-normed spaces in 1999-2003 (see \cite%
{Lewandowska1}). In \cite{Elumalai1}, Elumalai and his coworkers published a
series of papers related this subject. They have developed best
approximation theory in the context of linear 2-normed spaces. These are
some works on characterization of 2-normed spaces, extension of
2-functionals and approximation in 2-normed spaces (see \cite{Acikgoz1}, 
\cite{Acikgoz2}). Also, the author has some works in $\varepsilon $%
-approximation theory \cite{Acikgoz3} and Rezapour has also such studies in 
\cite{Rezapour1}. The essential aim of this paper is to derive new different
definitions of approximation and obtain some results related to these
definitions. The essential results of the set of best simultaneous
approximation are given in the fourth section of this paper.

Throughout this paper, we first fix some notations. Let $X$ be a linear
space and $L\left\{ y\right\} $ be the subspace of $Y$ generated by $y$. It
is also let $\left( X,\left\Vert .\right\Vert \right) $ and $(X,\Vert
.,.\Vert )$ denote a normed space and $2$-normed space with the
corresponding norms, respectively. $%
\mathbb{R}
$ denotes the set of real numbers, $%
\mathbb{N}
$ denotes the set of natural numbers and $%
\mathbb{C}
$ denotes the set of complex numbers. Throughout this work, $K$ is variously
considered as an indeterminate, as a real number $K\in 
\mathbb{R}
$, or as a complex number $K\in 
\mathbb{C}
$.

We now summarize our work in four section as follows:

In first section, we gave history of normed and $2$-normed spaces and
motivation of our work. In section 2 and 3, we specify definitions and
properties of normed and $2$-normed spaces, respectively. In section 4, we
gave suitable a definition for studying in linear $2$-normed spaces and so
we derived three lemma and two proposition by using our definition.

Thus, we are now ready in order to begin with the second section as follows.

\section{\textbf{Some Definitions of Normed Spaces}}

\begin{definition}
Let $\left( X,\left\Vert .\right\Vert \right) $ be a normed space and $%
K\subset X$. For $u\in X$,

\begin{equation*}
\underset{v\in K}{\inf }\left\{ \left\Vert u-v\right\Vert \right\}
\end{equation*}%
is a general best approximation.
\end{definition}

Mohebi and Rubinov (\cite{Mohebi1}) and Rezapour in \cite{Rezapour1}\ gave
the main preliminaries on the approximation theory in the usual sense as
follows:

\begin{definition}
Let $\left( X,\left\Vert .\right\Vert \right) $ be a linear normed space.
For a nonempty subset $A$ of $X$ and $x\in X$,

\begin{equation*}
d\left( x,A\right) =\underset{a\in A}{\inf }\left\{ \left\Vert
u-a\right\Vert \right\}
\end{equation*}

denotes the distance from $x$ to the set $A$. If 
\begin{equation*}
\left\Vert x-a_{0}\right\Vert =d\left( x,A\right) \text{.}
\end{equation*}

Then, we say that a point $a_{0}\in A$ is called a best approximation for $%
x\in X$. If each $x\in X$ has at least one best approximation $a_{0}\in A,$
then $A$ is called a proximinal subset of $X$. If each $x\in X$ has a unique
best approximation $a_{0}\in A$, then $A$ is called a Chebyshev subset of $X$%
.
\end{definition}

\begin{definition}
Let $A\subset X$. For $x\in X$,%
\begin{equation*}
P_{A}\left( x\right) =\left\{ a\in A:\left\Vert x-a\right\Vert =d\left(
x,A\right) \right\}
\end{equation*}%
where $P_{A}\left( x\right) $, the set of all best approximations of $x$ in $%
A$. We know that $P_{A}\left( x\right) $ is a closed and bounded subset of $%
X $. For $x\notin A$, $P_{A}\left( x\right) $ is located in the boundary of $%
A$.
\end{definition}

\begin{definition}
Let $\left( X,\left\Vert .\right\Vert \right) $ be a linear normed space.
For a nonempty subset $A$ of $X$ and a nonempty set $W$ of $X$,%
\begin{equation*}
d\left( A,W\right) =\underset{w\in W}{\inf }\underset{a\in A}{\sup }\left\{
\left\Vert a-w\right\Vert \right\}
\end{equation*}%
denotes the distance from the set $A$ to the set $W$. If%
\begin{equation*}
\underset{w\in W}{\inf }\underset{a\in A}{\sup }\left\{ \left\Vert
a-w\right\Vert \right\} =\underset{a\in A}{\sup }\left\{ \left\Vert
a-w_{0}\right\Vert \right\} \text{.}
\end{equation*}%
Then, we say that a point $w_{0}\in W$ is called a best approximation from $%
A $ to $W$.
\end{definition}

\section{\textbf{Properties of }$2$\textbf{-Normed Spaces}}

In \cite{Freese}, Cho \textit{et al}. defined linear 2-normed spaces and
gave interesting properties of them. After, Lewandowska defined generalized $%
2$-normed spaces and derived properties of these spaces in \cite%
{Lewandowska1}. Now, let us give the definition of 2-normed space.

\begin{definition}
Let $X$ be a linear space over $F$, where $F$ is the real or complex numbers
field, $\dim X>1$, and let 
\begin{equation*}
\left\Vert .,.\right\Vert :X^{2}\rightarrow \mathbb{R}^{+}\cup \left\{
0\right\}
\end{equation*}%
be a non-negative real-valued function on $X\times X$ with the following
properties:
\end{definition}

\textit{N1)}\textbf{\ }$\left\Vert x,y\right\Vert =0$ \textit{if and only if}
$x$ \textit{and} $y$ \textit{are linearly dependent vectors},

\textit{N2)}\textbf{\ }$\left\Vert x,y\right\Vert =\left\Vert y,x\right\Vert 
$ \textit{for all} $x,y\in X$,

\textit{N3)}\textbf{\ }$\left\Vert \alpha x,y\right\Vert =\left\vert \alpha
\right\vert \left\Vert x,y\right\Vert $ \textit{for all} $\alpha \in K$ 
\textit{and all }$x,y\in X$,

\textit{N4)}\textbf{\ }$\left\Vert x+y,z\right\Vert \leq \left\Vert
x,z\right\Vert +\left\Vert y,z\right\Vert $ \textit{for all} $x,y,z\in X$.

\textit{Then,} $\left\Vert .,.\right\Vert $ \textit{is called a 2-norm on} $%
X $ \textit{and }$\left( X,\left\Vert .,.\right\Vert \right) $ \textit{is
called a linear 2-normed space}.

Every 2-normed space is a locally convex topological linear space. In fact,
for a fixed $b\in X$. For all $x\in X$, 
\begin{equation*}
p_{b}\left( x\right) =\left\Vert x,b\right\Vert
\end{equation*}

which is a seminorm and the family of $P$, that is 
\begin{equation*}
P=\left\{ p_{b}:b\in X\right\}
\end{equation*}
generates a locally convex topology on $X.$ This space will be denoted by $%
\left( X,p_{b}\right) $. In each 2-normed space $\left( X,\left\Vert
.,.\right\Vert \right) $. For all $x,y\in X$ and for every real $\alpha $,
we have non-negative norm, 
\begin{equation*}
\left\Vert x,y\right\Vert \geq 0\text{ and }\left\Vert x,y+\alpha
x\right\Vert =\left\Vert x,y\right\Vert \text{.}
\end{equation*}
Also, if $x$, $y$ and $z$ are linearly dependent, this occurs for $\dim X=2$%
. Then, 
\begin{equation*}
\left\Vert x,y+z\right\Vert =\left\Vert x,y\right\Vert +\left\Vert
x,z\right\Vert or\left\Vert x,y-z\right\Vert =\left\Vert x,y\right\Vert
+\left\Vert x,z\right\Vert \text{.}
\end{equation*}

\begin{example}
(\cite{White})Let $P_{n}$ denotes the set of real polynomials of degree less
than or equal to $n$, on the interval $\left[ 0,1\right] $. By considering
usual addition and scalar multiplication, $P_{n}$ is a linear vector space
over the reals. Let $\left\{ x_{1},x_{2},\cdots ,x_{2n}\right\} $ be
distinct fixed points in $\left[ 0,1\right] $ and define the 2-norm on $%
P_{n} $ as%
\begin{equation*}
\left\Vert f,g\right\Vert =\dsum\limits_{k=1}^{2n}\left\vert f\left(
x_{k}\right) g^{\prime }\left( x_{k}\right) -f^{\prime }\left( x_{k}\right)
g\left( x_{k}\right) \right\vert \text{.}
\end{equation*}%
Then, $\left( P_{n},\left\Vert .,.\right\Vert \right) $ is a 2-normed space.
\end{example}

Let $\left( X,\left\Vert .,.\right\Vert \right) $ be a 2-normed space. Under
this assumption, we can give the following defitinions:

\begin{definition}
(\cite{White})A sequence $\left\{ x_{n}\right\} _{n\geq 1}$ in a linear
2-normed space $X$ is called Cauchy sequence if there exist independent
elements $y,z\in X$ such that 
\begin{equation*}
\lim_{n,m\rightarrow \infty }\left\Vert x_{n}-x_{m},y\right\Vert =0\text{
and }\lim_{n,m\rightarrow \infty }\left\Vert x_{n}-x_{m},z\right\Vert =0%
\text{.}
\end{equation*}
\end{definition}

\begin{definition}
(\cite{White})A sequence $\left\{ x_{n}\right\} _{n\geq 1}$ in a linear
2-normed space $X$ is called convergent if there exists an element $x\in X$
such that 
\begin{equation*}
\lim_{n\rightarrow \infty }\left\Vert x_{n}-x,z\right\Vert =0
\end{equation*}%
for all $z\in X$.
\end{definition}

\begin{proposition}
(\cite{Cobzas})Let $\left( X,\left\Vert .,.\right\Vert \right) $ be 2-normed
space and $W$ be a subspace of $X$, $b\in X$ and $L\left\{ b\right\} $ be
the subspace of $X$ generated by $b$. If $x_{0}\in X$ is such that%
\begin{equation*}
\delta =\underset{w\in W}{\inf }\left\{ \left\Vert x_{0}-w,b\right\Vert
\right\} >0\text{.}
\end{equation*}%
Then, there exists a bounded bilinear functional as follows 
\begin{equation*}
f:X\times L\left\{ b\right\} \rightarrow K
\end{equation*}
such that 
\begin{equation*}
F|_{w\times L\left\{ b\right\} }=0\text{, }F\left( x_{0},b\right) =1\text{
and }\left\Vert F\right\Vert =\frac{1}{\delta }\text{.}
\end{equation*}
\end{proposition}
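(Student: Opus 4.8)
The plan is to reduce this to the classical Hahn--Banach extension theorem applied to the seminorm $p_{b}(x)=\left\Vert x,b\right\Vert$ introduced above (equivalently, one may pass to the quotient normed space $X/L\left\{ b\right\}$ equipped with the norm $\left\Vert x+L\left\{ b\right\}\right\Vert =\left\Vert x,b\right\Vert$, in which $x_{0}$ has distance $\delta$ from the image of $W$). First, one may assume $b\neq 0$, since otherwise $\left\Vert \cdot ,b\right\Vert \equiv 0$, forcing $\delta =0$ and contradicting the hypothesis. When $b\neq 0$, property N1 gives that $p_{b}(x)=0$ exactly when $x\in L\left\{ b\right\}$, so the kernel of $p_{b}$ is precisely $L\left\{ b\right\}$. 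One also notes that $x_{0}\notin W$ and $x_{0}\notin L\left\{ b\right\}$: in the first case the choice $w=x_{0}\in W$ would give $\delta =0$, and in the second case $w=0\in W$ together with $\left\Vert x_{0},b\right\Vert =0$ would do the same. Hence the sum $M:=W\oplus L\left\{ x_{0}\right\}$ is direct.

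Next I would define the linear functional $g_{0}:M\rightarrow K$ by $g_{0}(w+\lambda x_{0})=\lambda$ and verify the bound $\left\vert g_{0}(m)\right\vert \leq \tfrac{1}{\delta }p_{b}(m)$ for all $m\in M$. Indeed, for $\lambda \neq 0$, using N3 and $-w/\lambda \in W$,
\begin{equation*}
\left\Vert w+\lambda x_{0},b\right\Vert =\left\vert \lambda \right\vert \left\Vert x_{0}-(-w/\lambda ),b\right\Vert \geq \left\vert \lambda \right\vert \delta ,
\end{equation*}
while the case $\lambda =0$ is trivial. Since $\tfrac{1}{\delta }p_{b}$ is a seminorm on $X$, the Hahn--Banach theorem (in its real or complex form) yields a linear extension $g:X\rightarrow K$ of $g_{0}$ with $\left\vert g(x)\right\vert \leq \tfrac{1}{\delta }\left\Vert x,b\right\Vert$ for every $x\in X$. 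Thus $g$ vanishes on $W$ and on $L\left\{ b\right\}$ and $g(x_{0})=1$. The bound $\left\vert g(x)\right\vert \leq \tfrac{1}{\delta }\left\Vert x,b\right\Vert$ gives half of the norm equality; for the other half, choose $w_{n}\in W$ with $\left\Vert x_{0}-w_{n},b\right\Vert \rightarrow \delta$, so that $1=\left\vert g(x_{0}-w_{n})\right\vert$ while $\left\Vert x_{0}-w_{n},b\right\Vert \geq \delta >0$, which on renormalizing and passing to the limit shows the constant $\tfrac{1}{\delta }$ cannot be improved.

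Finally, since $L\left\{ b\right\} =\{\lambda b:\lambda \in K\}$ with $b\neq 0$, I would set $F(x,\lambda b)=\lambda \,g(x)$; this is well defined and bilinear on $X\times L\left\{ b\right\}$, vanishes on $W\times L\left\{ b\right\}$, and satisfies $F(x_{0},b)=g(x_{0})=1$. For any $x\in X$ and $\lambda b$ with $\left\Vert x,\lambda b\right\Vert =\left\vert \lambda \right\vert \left\Vert x,b\right\Vert \neq 0$ one has $\left\vert F(x,\lambda b)\right\vert /\left\Vert x,\lambda b\right\Vert =\left\vert g(x)\right\vert /\left\Vert x,b\right\Vert$, whence $\left\Vert F\right\Vert =\tfrac{1}{\delta }$. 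The step requiring the most care is this last normalization: one must observe that, because every vector in the second slot is a scalar multiple of the single vector $b$, the operator norm of $F$ collapses exactly onto the $p_{b}$-norm of $g$, and that it is precisely N1 that guarantees this quantity is finite, so that the renormalizations $x\mapsto x/\left\Vert x,b\right\Vert$ are legitimate and $g$ is forced to vanish on $L\left\{ b\right\}$. The Hahn--Banach invocation itself is routine.
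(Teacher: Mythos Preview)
Your argument is correct and is essentially the standard Hahn--Banach route for this kind of result: reduce to the seminorm $p_{b}(x)=\left\Vert x,b\right\Vert$, build the obvious functional on $W\oplus L\{x_{0}\}$, extend, and then ``bilinearize'' via $F(x,\lambda b)=\lambda\,g(x)$. Each step checks out, including the norm computation at the end.

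As for comparison with the paper: the paper does \emph{not} supply its own proof of this proposition. It is quoted from Cobza\c{s}--Mustu\c{t}a \cite{Cobzas} and stated without argument in the preliminaries section. Your proof is exactly the kind of Hahn--Banach argument one expects in that reference (and, in particular, uses the same seminorm $p_{b}$ that the present paper introduces immediately after Definition~5), so there is nothing to contrast beyond noting that you have filled in what the paper merely cites.
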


\begin{definition}
A 2-normed space $\left( X,\left\Vert .,.\right\Vert \right) $\ is which
every Cauchy sequence $\left( x_{n}\right) $ converges to some $x\in X$ then 
$X$ is said to be complete with respect to the 2-norm.
\end{definition}

\begin{definition}
A complete 2-normed space $\left( X,\left\Vert .,.\right\Vert \right) $\ is
called a 2-Banach space.
\end{definition}

The examples 1 and 2 are 2-Banach spaces while the example 3 does not (For
details, see \cite{White}).

\begin{lemma}
(\cite{White}) $\left( i\right) $ Every 2-normed space of dimension 2 is a
2-Banach space, when the underlying field is complete.
\end{lemma}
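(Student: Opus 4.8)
\emph{Proof proposal.} The plan is to exploit the elementary linear-algebra fact that in a space of dimension two any pair of linearly independent vectors is already a basis; this collapses the completeness question for $X$ onto the completeness of the underlying scalar field $F$.

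First I would fix an arbitrary Cauchy sequence $\{x_n\}_{n\ge 1}$ in $X$. By the definition of a Cauchy sequence in a linear $2$-normed space there exist linearly independent elements $y,z\in X$ with
\[
\lim_{n,m\to\infty}\|x_n-x_m,y\|=0 \qquad\text{and}\qquad \lim_{n,m\to\infty}\|x_n-x_m,z\|=0 .
\]
Since $\dim X=2$, the independent pair $\{y,z\}$ spans $X$, so each term may be written uniquely as $x_n=\alpha_n y+\beta_n z$ with $\alpha_n,\beta_n\in F$, and therefore $x_n-x_m=(\alpha_n-\alpha_m)y+(\beta_n-\beta_m)z$.

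Next I would convert the two limit conditions into statements about the scalar coefficients. Combining property (N3) with the identity $\|a+\lambda b,b\|=\|a,b\|$ recorded above (which itself follows from (N1), (N3), (N4)), one gets
\[
\|x_n-x_m,y\|=|\beta_n-\beta_m|\,\|z,y\|,\qquad \|x_n-x_m,z\|=|\alpha_n-\alpha_m|\,\|y,z\| .
\]
Because $y$ and $z$ are linearly independent, $\|y,z\|=\|z,y\|>0$ by (N1) and (N2), so the hypotheses force $\{\alpha_n\}$ and $\{\beta_n\}$ to be Cauchy sequences of scalars. This is the single point at which completeness of $F$ is used: there exist $\alpha,\beta\in F$ with $\alpha_n\to\alpha$ and $\beta_n\to\beta$. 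Put $x:=\alpha y+\beta z\in X$.

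Finally I would verify that $x_n\to x$ in the sense of the convergence definition, i.e. $\|x_n-x,w\|\to 0$ for every $w\in X$. Writing $x_n-x=(\alpha_n-\alpha)y+(\beta_n-\beta)z$ and applying (N4) and (N3) gives
\[
\|x_n-x,w\|\le |\alpha_n-\alpha|\,\|y,w\|+|\beta_n-\beta|\,\|z,w\|\longrightarrow 0
\]
as $n\to\infty$. Hence the arbitrary Cauchy sequence $\{x_n\}$ converges to a point of $X$, so $X$ is complete, i.e. a $2$-Banach space. The computation in the third paragraph is the only step needing any care, and it is light; the real content — and the only thing that would fail in higher dimension — is the observation that the independent pair $\{y,z\}$ supplied by the Cauchy condition can simultaneously serve as a coordinate basis. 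I do not anticipate any genuine obstacle.
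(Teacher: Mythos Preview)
The paper does not supply its own proof of this lemma; it is quoted from White \cite{White} and stated without argument. So there is nothing in the paper to compare your proposal against line by line.

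That said, your argument is correct and is essentially the standard one. The key observation you make --- that in dimension~$2$ the independent pair $\{y,z\}$ furnished by the Cauchy definition is automatically a basis --- is exactly what drives the result, and your use of the identity $\|a+\lambda b,b\|=\|a,b\|$ to extract the scalar coefficients is the right mechanism. The only cosmetic remark is that the paper records this identity for real $\alpha$, whereas you implicitly allow complex scalars; but the identity follows from (N1), (N3), (N4) and (N2) for any scalar, so there is no gap. Your final verification of convergence against an arbitrary $w\in X$ via (N4) and (N3) is also clean. Nothing is missing.
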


$\left( ii\right) $\textit{\ If }$\left\{ x_{n}\right\} $\textit{\ is a
sequence in 2-normed space }$\left( X,\left\Vert .,.\right\Vert \right) $%
\textit{\ and if}%
\begin{equation*}
\lim_{n\rightarrow \infty }\left\Vert x_{n}-x,y\right\Vert =0\text{\textit{.}%
}
\end{equation*}

then, we have%
\begin{equation*}
\lim_{n\rightarrow \infty }\left\Vert x_{n},y\right\Vert =\left\Vert
x,y\right\Vert \text{.}
\end{equation*}

\section{\textbf{Fundamental Results}}

In this section, let us also consider a definition, and however, we give
Lemma and Proposition for the best simultaneous approximation in linear $2$%
-normed spaces.

\begin{definition}
Let $\left( X,\left\Vert .,.\right\Vert \right) $ be a linear $2$-normed
space and $W$ be any bounded subset of $X$. An element $g^{\ast }\in G$ is
said to be a best approximation to the set $W$, if%
\begin{equation*}
\underset{f\in W}{\sup }\left\Vert f-g^{\ast },b\right\Vert =\underset{g\in G%
}{\inf }\left\{ \underset{f\in W}{\sup }\left\Vert f-g,b\right\Vert \right\}
\end{equation*}%
\newline
where $b\in X\backslash L\left\{ f,g^{\ast }\right\} $ is the subspace of $X$
generated by $f$ and $g^{\ast }$.
\end{definition}

\begin{lemma}
Let $\left( X,\left\Vert .,.\right\Vert \right) $ be a linear $2$-normed
space, $G\subset X$ and $W$ be bounded subset of $X$. Then, 
\begin{equation*}
\Phi \left( g,b\right) =\underset{f\in W}{\sup }\left\{ \left\Vert
f_{1}-g,b\right\Vert ,\left\Vert f_{2}-g,b\right\Vert \right\}
\end{equation*}%
is a continuous functional on $X$, where $b\in X\backslash L\left\{
f,g^{\ast }\right\} $.
\end{lemma}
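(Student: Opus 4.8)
The plan is to show that the map $g \mapsto \Phi(g,b)$ is finite-valued and Lipschitz with respect to the seminorm $p_b(\cdot) = \left\Vert \cdot, b \right\Vert$, from which continuity follows at once --- both in the locally convex topology generated by the family $P$ and, equivalently, as sequential continuity with respect to the notion of convergence introduced in the third section. First I would check that $\Phi(g,b)$ is a well-defined real number for each $g \in X$: since $W$ is bounded, $M := \sup_{f \in W} \left\Vert f, b \right\Vert < \infty$, and applying N2 together with the triangle inequality N4 gives $\left\Vert f_i - g, b \right\Vert \leq \left\Vert f_i, b \right\Vert + \left\Vert g, b \right\Vert \leq M + \left\Vert g, b \right\Vert$ for $i = 1,2$, so the supremum defining $\Phi(g,b)$ is finite.

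The \emph{key step} is the estimate $\left| \Phi(g, b) - \Phi(h, b) \right| \leq \left\Vert g - h, b \right\Vert$ for all $g, h \in X$. By N4, for each $f \in W$ and each index $i$ we have $\left\Vert f_i - g, b \right\Vert \leq \left\Vert f_i - h, b \right\Vert + \left\Vert h - g, b \right\Vert$. Taking the supremum over $i \in \{1,2\}$ and then over $f \in W$ on the right-hand side majorizes the corresponding supremum on the left, hence $\Phi(g, b) \leq \Phi(h, b) + \left\Vert g - h, b \right\Vert$; interchanging the roles of $g$ and $h$ and using N2 to rewrite $\left\Vert h - g, b \right\Vert = \left\Vert g - h, b \right\Vert$ yields the reverse inequality, and the two together give the claimed bound.

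Finally, continuity is immediate: if $g_n \to g$ in $X$, meaning $\left\Vert g_n - g, z \right\Vert \to 0$ for all $z \in X$ and in particular for $z = b$, then $\left| \Phi(g_n, b) - \Phi(g, b) \right| \leq \left\Vert g_n - g, b \right\Vert \to 0$, so $\Phi(g_n, b) \to \Phi(g, b)$; the same Lipschitz estimate also shows continuity directly in the $p_b$-topology. I expect the only delicate point to be the manipulation of the nested suprema in the key step, together with the use of the boundedness of $W$ to guarantee finiteness; there should be no genuine obstacle beyond this bookkeeping.
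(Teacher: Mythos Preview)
Your proof is correct and follows essentially the same route as the paper's: apply the triangle inequality N4 to obtain $\Phi(g,b)\le\Phi(h,b)+\left\Vert g-h,b\right\Vert$, then interchange $g$ and $h$ to conclude continuity. Your version is in fact tidier --- you get the sharp Lipschitz constant $1$ and explicitly use the boundedness of $W$ to check finiteness --- though note that the identity $\left\Vert h-g,b\right\Vert=\left\Vert g-h,b\right\Vert$ comes from N3 (homogeneity with $\alpha=-1$), not from N2.
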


\begin{proof}
Since the norms $\left\Vert f_{1}-g,b\right\Vert ,\left\Vert
f_{2}-g,b\right\Vert $are continuous functionals of $g$\ on $X$, $\phi
\left( g,b\right) $ is clearly a continuous functional. To show this, for
any $f_{1},f_{2}\in W$\ and $g,g^{^{\prime }}\in X$\ , we have 
\begin{eqnarray*}
&&\left\{ \left\Vert f_{1}-g,b\right\Vert ,\left\Vert f_{2}-g,b\right\Vert
\right\} \\
&\leq &\left\{ \left\Vert f_{1}-g^{^{\prime }},b\right\Vert +\left\Vert
g-g^{^{\prime }},b\right\Vert ,\left\Vert f_{2}-g^{^{\prime }},b\right\Vert
,\left\Vert g-g^{^{\prime }},b\right\Vert \right\} \text{.}
\end{eqnarray*}

Then

\begin{eqnarray*}
&&\underset{f_{1},f_{2}\in w}{\sup }\left\{ \left\Vert f_{1}-g,b\right\Vert
,\left\Vert f_{2}-g,b\right\Vert \right\} \\
&\leq &\underset{f_{1},f_{2}\in w}{\sup }\left\{ \left\Vert
f_{1}-g^{^{\prime }},b\right\Vert +\left\Vert g-g^{^{\prime }},b\right\Vert
,\left\Vert f_{2}-g^{^{\prime }},b\right\Vert ,\left\Vert g-g^{^{\prime
}},b\right\Vert \right\} \text{.}
\end{eqnarray*}

Now, if

\begin{equation*}
\left\Vert g-g^{^{\prime }},b\right\Vert <\frac{\varepsilon }{2},\text{ then 
}\phi \left( g,b\right) \leq \phi \left( g^{^{\prime }},b\right)
+\varepsilon \text{.}
\end{equation*}

By interchanging $g$\ and $g^{%
{\acute{}}%
}$, proof of Theorem will be completed.
\end{proof}

\begin{lemma}
Let $\left( X,\left\Vert .,.\right\Vert \right) $ be a linear $2$-normed
space, $G\subset X$ and $W$ be bounded subset of $X$. Then there exists a
best simultaneous approximation $g^{\ast }\in G$ to any given compact subset 
$W\subset X$.
\end{lemma}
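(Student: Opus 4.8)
The plan is to mimic the classical proof of existence of best approximations from a compact set: produce a minimizing sequence, extract a convergent subsequence using compactness of $W$ (or of the relevant subset of $G$), and pass to the limit using the continuity established in the previous lemma. First I would set $\delta = \inf_{g \in G}\Phi(g,b)$ and choose a sequence $(g_n)$ in $G$ with $\Phi(g_n,b) \to \delta$. Since $W$ is compact, the key technical point is to argue that the $g_n$ can be taken to lie in a compact subset of $G$: for fixed $f_0 \in W$, boundedness of $(\Phi(g_n,b))$ forces $\|f_0 - g_n, b\|$ to be bounded, which (together with compactness/boundedness of $W$) pins the $g_n$ into a bounded, hence — in the compact setting we are assuming — relatively compact region. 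Then extract $g_{n_k} \to g^\ast$ with $g^\ast \in G$.

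Next I would invoke Lemma~2 (the continuity of $\Phi(\cdot,b)$ on $X$) together with part $(ii)$ of Lemma~1, which guarantees that convergence in the $2$-norm $\|x_n - x, y\| \to 0$ yields $\|x_n,y\| \to \|x,y\|$. Applying this with the $\sup$ over the compact set $W$, one gets $\Phi(g_{n_k},b) \to \Phi(g^\ast,b)$, and hence $\Phi(g^\ast,b) = \delta$, so $g^\ast$ is a best simultaneous approximation to $W$. The interchange of limit and supremum is legitimate here precisely because $W$ is compact: the map $f \mapsto \|f-g,b\|$ is continuous on $W$, so the supremum is attained and behaves well under uniform perturbations of $g$, exactly as quantified in the $\varepsilon$-estimate $\Phi(g,b) \le \Phi(g',b) + \varepsilon$ from Lemma~2.

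The main obstacle — and the place where the argument needs the most care — is the compactness/closedness structure on $G$. As literally stated the lemma only assumes $G \subset X$ with $W$ compact, but to extract a convergent subsequence $g_{n_k} \to g^\ast$ with $g^\ast \in G$ one really needs $G$ to be (boundedly) compact, or at least closed with the minimizing sequence confined to a compact set; otherwise the infimum need not be attained. I would therefore either assume $G$ is a compact (or finite-dimensional, hence boundedly compact) subset — which is the standard hypothesis in this line of work, cf.\ \cite{Elumalai1} — or explicitly add that hypothesis, and then the extraction step goes through. A secondary, more routine subtlety is the role of the auxiliary vector $b$: one must keep $b \in X \setminus L\{f,g^\ast\}$ throughout, so that the $2$-norm terms are genuinely positive and the topology induced by $p_b$ is the one in which continuity and convergence are measured; this is handled exactly as in Definition~11 and causes no real difficulty once noted.
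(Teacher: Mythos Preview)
Your approach is essentially the same as the paper's: the paper's proof is a one-line appeal to the method of Elumalai \textit{et al.}, invoking only the continuity of the functional $\Phi(g,b)$ established in the preceding lemma, and your sketch simply fills in the standard minimizing-sequence/compactness argument that this appeal is meant to summon. You are also right to flag the missing compactness hypothesis on $G$; the paper's statement and proof leave this implicit, and without it the extraction step has no force.
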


\begin{proof}
By using the proof of Elumalai and his coworkers in same manner, we can make
the proof, using the definition of the continuous functional 
\begin{equation*}
\phi \left( g,b\right) =\underset{f\in W}{\sup }\left\{ \left\Vert
f_{1}-g,b\right\Vert ,\left\Vert f_{2}-g,b\right\Vert \right\} \text{.}
\end{equation*}
\end{proof}

\begin{lemma}
Let $\left( X,\left\Vert .,.\right\Vert \right) $ be a linear $2$-normed
space, $G\subset X$ and $W$ be bounded subset of $X$. If $g_{1},g_{2}\in G$\
are best simultaneous approximations to $W$ by elements of $G$. Then $%
g=\lambda _{1}g_{1}+\lambda _{2}g_{2}$ is also a best simultaneous
approximation to $f_{1}$ and $f_{2}$, where $0\leq \lambda \leq 1$ and $%
\lambda _{1}+\lambda _{2}=1$.
\end{lemma}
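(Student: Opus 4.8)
The plan is to exploit the convexity of the functional
\[
\phi(g,b)=\underset{f\in W}{\sup}\left\{\left\Vert f_{1}-g,b\right\Vert ,\left\Vert f_{2}-g,b\right\Vert\right\}
\]
in the variable $g$, which is an immediate consequence of the positive homogeneity \textit{(N3)} and the triangle inequality \textit{(N4)} of the $2$-norm. Set $\rho=\underset{g\in G}{\inf}\,\phi(g,b)$. Since $g_{1}$ and $g_{2}$ are both best simultaneous approximations to $W$, we have $\phi(g_{1},b)=\phi(g_{2},b)=\rho$, and the goal is to show that $\phi(g,b)=\rho$ for $g=\lambda_{1}g_{1}+\lambda_{2}g_{2}$.

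First I would fix $i\in\{1,2\}$ and, using $\lambda_{1}+\lambda_{2}=1$, rewrite $f_{i}-g=\lambda_{1}(f_{i}-g_{1})+\lambda_{2}(f_{i}-g_{2})$. Applying \textit{(N4)} and then \textit{(N3)} gives
\[
\left\Vert f_{i}-g,b\right\Vert \leq \lambda_{1}\left\Vert f_{i}-g_{1},b\right\Vert +\lambda_{2}\left\Vert f_{i}-g_{2},b\right\Vert .
\]
Taking the supremum over $f\in W$ of the two expressions for $i=1,2$, and using that the supremum of a sum does not exceed the sum of the suprema, yields
\[
\phi(g,b)\leq\lambda_{1}\phi(g_{1},b)+\lambda_{2}\phi(g_{2},b)=(\lambda_{1}+\lambda_{2})\rho=\rho .
\]

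For the reverse inequality, observe that $g=\lambda_{1}g_{1}+\lambda_{2}g_{2}$ belongs to $G$ (this is where one uses that $G$ is convex, implicit in the statement since the combination is asserted to be an element of $G$), so by the definition of the infimum $\phi(g,b)\geq\rho$. Combining the two estimates gives $\phi(g,b)=\rho$, so $g$ is a best simultaneous approximation to $f_{1}$ and $f_{2}$.

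The computation itself is routine; the only delicate point is the handling of the auxiliary vector $b$. One should verify that a single $b\in X\setminus L\{f,g\}$ can serve simultaneously for $g_{1}$, $g_{2}$ and $g$, so that all three quantities $\phi(g_{1},b)$, $\phi(g_{2},b)$, $\phi(g,b)$ are genuinely the ones entering the definition of best simultaneous approximation. This bookkeeping, rather than any analytic difficulty, is what I expect to be the main obstacle; everything else reduces to the convexity inequality displayed above.
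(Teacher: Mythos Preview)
Your argument is correct and follows the same route as the paper: write $f_{i}-g=\lambda_{1}(f_{i}-g_{1})+\lambda_{2}(f_{i}-g_{2})$, apply the triangle inequality and homogeneity of the $2$-norm, take suprema, and compare with the infimum $\rho$. Your version is in fact more explicit than the paper's (which omits the reverse inequality and the implicit convexity of $G$), and your remark about the bookkeeping for the auxiliary vector $b$ is a fair caveat that the paper simply ignores.
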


\begin{proof}
By using expression of $\underset{f\in W}{\sup }\left\{ \left\Vert f_{1}-%
\overset{\_}{g},b\right\Vert ,\left\Vert f_{2}-\overset{\_}{g},b\right\Vert
\right\} $, we discover the followings

\begin{eqnarray*}
&=&\underset{f\in W}{\sup }\left\{ \left\Vert f_{1}-\lambda
_{1}g_{1}-\lambda _{2}g_{2},b\right\Vert ,\left\Vert f_{2}-\lambda
_{1}g_{1}-\lambda _{2}g_{2},b\right\Vert \right\} \\
&=&\underset{f\in W}{\sup }\left\{ \left\Vert \lambda \left(
f_{1}-g_{1}\right) +\left( 1-\lambda \right) \left( f_{1}-g_{2}\right)
,b\right\Vert ,\left\Vert \lambda \left( f_{2}-g_{1}\right) +\left(
1-\lambda \right) \left( f_{2}-g_{2}\right) ,b\right\Vert \right\} \text{.}
\end{eqnarray*}

From last equality, we easily derive as%
\begin{equation*}
\leq \left[ 
\begin{array}{c}
\underset{f\in W}{\sup }\left\{ \left\Vert \lambda \left( f_{1}-g_{1}\right)
+\left( 1-\lambda \right) \left( f_{1}-g_{2}\right) ,b\right\Vert
,\left\Vert \lambda \left( f_{2}-g_{1}\right) +\left( 1-\lambda \right)
\left( f_{2}-g_{2}\right) ,b\right\Vert \right\} \\ 
+\underset{f\in W}{\sup }\left\{ \left\Vert \lambda \left(
f_{1}-g_{1}\right) +\left( 1-\lambda \right) \left( f_{1}-g_{2}\right)
,b\right\Vert ,\left\Vert \lambda \left( f_{2}-g_{1}\right) +\left(
1-\lambda \right) \left( f_{2}-g_{2}\right) ,b\right\Vert \right\}%
\end{array}%
\right]
\end{equation*}

By using definition of 2-norm and definition 10, we deduce as follows%
\begin{equation*}
\underset{g\in G}{\inf }\left\{ \underset{f\in W}{\sup }\left\Vert
f_{1}-g,b\right\Vert ,\left\Vert f_{2}-g,b\right\Vert \right\} \text{.}
\end{equation*}

Subsequently, we complete the proof of Lemma.
\end{proof}

\begin{proposition}
Let $\left( X,\left\Vert .,.\right\Vert \right) $ be a linear $2$-normed
space, $G$ is a non-empty strictly convex subset of $X$ and $Y$ be a compact
subset of $X$. Then there is only one $y_{0}\in Y$ such that 
\begin{equation*}
\left\Vert x_{0}-y_{0},z\right\Vert =\underset{y\in Y}{\inf }\left\{
\left\Vert x_{0}-y,z\right\Vert \right\}
\end{equation*}%
for $x_{0}\in X\backslash Y$ and for every $z\in X\backslash L\left\{ x\in
G\right\} $ and $y_{0}\in Y$.
\end{proposition}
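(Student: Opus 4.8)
The plan is to split the statement into the usual two halves — existence of a minimiser, obtained from compactness and continuity, and uniqueness of the minimiser, obtained from the strict convexity hypothesis — in direct analogy with the classical theory of Chebyshev sets, transplanted to the $2$-normed setting.

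First I would prove existence. Fix $x_{0}\in X\backslash Y$ and a base vector $z\in X\backslash L\left\{ x\in G\right\} $, and consider the real-valued map $\psi \left( y\right) =\left\Vert x_{0}-y,z\right\Vert $ for $y\in Y$. From \textit{N3)} and the triangle inequality \textit{N4)} one gets $\left\vert \psi \left( y\right) -\psi \left( y^{\prime }\right) \right\vert \leq \left\Vert y-y^{\prime },z\right\Vert $, so $\psi $ is continuous on $X$; this is the same continuity phenomenon already exploited for $\Phi $ in the continuity lemma above and recorded in part $\left( ii\right) $ of the White lemma. Since $Y$ is compact and $\psi $ is continuous, $\psi $ attains its infimum on $Y$, i.e. there is $y_{0}\in Y$ with $\left\Vert x_{0}-y_{0},z\right\Vert =\underset{y\in Y}{\inf }\left\{ \left\Vert x_{0}-y,z\right\Vert \right\} =:\delta $. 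This gives the existence of a best approximation.

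Next I would prove uniqueness via strict convexity. Suppose $y_{0},y_{1}\in Y$ both realise the infimum, $\left\Vert x_{0}-y_{j},z\right\Vert =\delta $ for $j=0,1$, and set $\bar{y}=\tfrac{1}{2}y_{0}+\tfrac{1}{2}y_{1}$, which again lies in the (convex) set from which we approximate. Then
\[
\left\Vert x_{0}-\bar{y},z\right\Vert =\left\Vert \tfrac{1}{2}\left( x_{0}-y_{0}\right) +\tfrac{1}{2}\left( x_{0}-y_{1}\right) ,z\right\Vert \leq \tfrac{1}{2}\left\Vert x_{0}-y_{0},z\right\Vert +\tfrac{1}{2}\left\Vert x_{0}-y_{1},z\right\Vert =\delta .
\]
Since $\delta $ is the infimum over the set, equality must hold throughout, so the triangle inequality in the seminorm $p_{z}\left( \cdot \right) =\left\Vert \cdot ,z\right\Vert $ is saturated by the vectors $u_{0}=x_{0}-y_{0}$ and $u_{1}=x_{0}-y_{1}$, both of $p_{z}$-length $\delta $. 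Strict convexity then prevents $u_{0}$ and $u_{1}$ from being distinct: otherwise the midpoint $\tfrac{1}{2}\left( u_{0}+u_{1}\right) $ is an interior point of the relevant unit ball scaled by $\delta $, whence $p_{z}\left( \tfrac{1}{2}\left( u_{0}+u_{1}\right) \right) <\delta $, contradicting the equality just obtained. Hence $u_{0}=u_{1}$, i.e. $y_{0}=y_{1}$.

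The step I expect to be the main obstacle is making this strict convexity argument watertight, because $\left\Vert \cdot ,z\right\Vert $ is only a seminorm. Fixing $z$ and passing to the quotient normed space $X/\ker p_{z}$, strict convexity on its face yields only that $u_{0}$ and $u_{1}$ agree modulo $\ker p_{z}$, that is, that $y_{0}-y_{1}$ is parallel to $z$; one must then use the hypothesis $z\in X\backslash L\left\{ x\in G\right\} $ (so that $z$ does not lie among the scalar multiples of the relevant differences of points of the approximating set) to upgrade this to $y_{0}=y_{1}$, and separately dispose of the degenerate case $\delta =0$, which is also excluded by that same hypothesis together with $x_{0}\notin Y$. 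Once this bookkeeping about the seminorm, its kernel, and the admissible base vectors is carried out, the midpoint computation above closes the proof.
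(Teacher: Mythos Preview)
Your existence argument (continuity of $y\mapsto\Vert x_{0}-y,z\Vert$ plus compactness of $Y$) is essentially the paper's, though the paper phrases it via a minimising Cauchy sequence $\{y_{n}\}$ converging to $y_{0}$ rather than appealing directly to attainment of the infimum.

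For uniqueness the two arguments genuinely differ. You use the strict-convexity hypothesis in the expected way: if $y_{0},y_{1}$ are both minimisers, the midpoint saturates the triangle inequality in the seminorm $p_{z}$, and strict convexity then forces $u_{0}=u_{1}$ modulo $\ker p_{z}$. The paper does not invoke strict convexity at all; instead it applies a parallelogram-type inequality
\[
\Vert y_{0}-y_{0}^{\prime},z\Vert^{2}\;\leq\;2\bigl(\Vert x_{0}-y_{0},z\Vert^{2}+\Vert x_{0}-y_{0}^{\prime},z\Vert^{2}\bigr)-4\Bigl\Vert \tfrac{y_{0}+y_{0}^{\prime}}{2}-x_{0},z\Bigr\Vert^{2}\;\leq\;4d_{0}^{2}-4d_{0}^{2}=0,
\]
and then repeats the computation with a second, linearly independent base vector $y^{\prime}$ in place of $z$ to obtain $\Vert y_{0}-y_{0}^{\prime},y^{\prime}\Vert=0$ as well, whence $y_{0}=y_{0}^{\prime}$. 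Thus the paper's device for escaping the seminorm-kernel problem you correctly flag is to work simultaneously with two independent base vectors, so that the intersection of the two kernels is $\{0\}$; your proposal instead tries to extract the same conclusion from the hypothesis $z\in X\backslash L\{x\in G\}$. The trade-off is that your route actually uses the stated strict-convexity hypothesis and avoids the (unstated) parallelogram law the paper's computation requires, while the paper's two-vector trick cleanly disposes of the kernel issue that you leave as residual bookkeeping.
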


\begin{proof}
If $x_{0}\in Y$. Then, we have $\left\Vert x_{0}-y_{0},z\right\Vert =0$.
Hence, assume that 
\begin{equation*}
x_{0}\in Y\ \text{or }x\in X\backslash Y\text{.}
\end{equation*}

If we say

\begin{equation*}
d_{0}=\underset{y\in Y}{\inf }\left\{ \left\Vert x_{0}-y,z\right\Vert
\right\}
\end{equation*}

and 
\begin{equation*}
d_{0}=\underset{y\in Y}{\inf }\left\{ \left\Vert x_{0}-y,y^{^{\prime
}}\right\Vert \right\} \text{.}
\end{equation*}

Then, there are linearly independent elements $y^{^{\prime }}$\ and $z$\ in $%
X$. So, there is a Cauchy sequence $\left\{ y_{n}\right\} $\ such that

\begin{equation*}
\underset{n\rightarrow \infty }{\lim }\left\Vert x_{0}-y_{n},z\right\Vert
=d_{0},\text{ \ \ }\underset{m\rightarrow \infty }{\lim }\left\Vert
x_{0}-y_{m},z\right\Vert =d_{0}
\end{equation*}

and 
\begin{equation*}
\underset{n\rightarrow \infty }{\lim }\left\Vert x_{0}-y_{n},y^{^{\prime
}}\right\Vert =d_{0},\text{ \ \ }\underset{m\rightarrow \infty }{\lim }%
\left\Vert x_{0}-y_{m},y^{^{\prime }}\right\Vert =d_{0}\text{.}
\end{equation*}

Thus, we procure the following 
\begin{equation*}
\left\Vert x_{0}-y_{0},y\right\Vert =d_{0}\text{ \ and \ }\left\Vert
x_{0}-y_{0},z\right\Vert =d_{0}\text{.}
\end{equation*}

By using the following inequalities

\begin{equation*}
d_{0}\leq \left\Vert x_{0}-y_{0},z\right\Vert \leq \left\Vert
x_{0}-y_{n},z\right\Vert +\left\Vert y_{n}-y_{0},z\right\Vert
\end{equation*}

and%
\begin{equation*}
d_{0}\leq \left\Vert x_{0}-y_{0},y\right\Vert \leq \left\Vert
x_{0}-y_{n},y\right\Vert +\left\Vert y_{n}-y_{0},y\right\Vert \text{.}
\end{equation*}

From this, we see that

\begin{eqnarray*}
\left\Vert y_{0}-y_{0}^{^{\prime }},z\right\Vert ^{2} &=&\left\Vert \left(
y_{0}-x_{0}\right) +\left( x_{0}-y_{0}^{^{\prime }}\right) ,z\right\Vert ^{2}
\\
&=&\left\Vert \left( y_{0}-x_{0}\right) +\left( x_{0}-y_{0}^{^{\prime
}}\right) ,z\right\Vert ^{2}+\left\Vert \left( y_{0}-x_{0}\right) +\left(
x_{0}-y_{0}^{^{\prime }}\right) ,z\right\Vert ^{2} \\
&&-\left\Vert y_{0}+y_{0}^{^{\prime }}-2x_{0},z\right\Vert ^{2} \\
&\leq &2\left( \left\Vert x_{0}-y_{0},z\right\Vert ^{2}+\left\Vert
y_{0}^{^{\prime }}-x_{0},z\right\Vert ^{2}\right) -4\left\Vert \frac{%
y_{0}+y_{0}^{^{\prime }}}{2}-x_{0},z\right\Vert ^{2} \\
&\leq &2\left( 2d_{0}^{2}\right) -4d_{0}^{2}=0\text{.}
\end{eqnarray*}

We find $y_{0}=y_{0}^{^{\prime }}$. In similar way, we again obtain $%
y_{0}=y_{0}^{^{\prime }}$\ with respect to $y$

\begin{eqnarray*}
\left\Vert y_{0}-y_{0}^{^{\prime }},y\right\Vert ^{2} &=&\left\Vert \left(
y_{0}-x_{0}\right) +\left( x_{0}-y_{0}^{^{\prime }}\right) ,y\right\Vert ^{2}
\\
&=&\left\Vert \left( y_{0}-x_{0}\right) +\left( x_{0}-y_{0}^{^{\prime
}}\right) ,y\right\Vert ^{2}+\left\Vert \left( y_{0}-x_{0}\right) +\left(
x_{0}-y_{0}^{^{\prime }}\right) ,y\right\Vert ^{2} \\
&&-\left\Vert y_{0}+y_{0}^{^{\prime }}-2x_{0},y\right\Vert ^{2} \\
&\leq &2\left( \left\Vert x_{0}-y_{0},y\right\Vert ^{2}+\left\Vert
y_{0}^{^{\prime }}-x_{0},y\right\Vert ^{2}\right) -4\left\Vert \frac{%
y_{0}+y_{0}^{^{\prime }}}{2}-x_{0},y\right\Vert ^{2} \\
&\leq &2\left( 2d_{0}^{2}\right) -4d_{0}^{2}=0
\end{eqnarray*}

Then, we complete the proof of theorem.
\end{proof}

Let $X$\ be a linear 2-normed space and $W_{1}$\ and $W_{2}$\ are linear
subspaces in $X$, and $f$\ be a 2-functional with domain $W_{1}\times W_{2}$%
. If $\left\Vert .,.\right\Vert $\ denotes 2-norm, then the problem is to
find an element $g^{\ast }\in G$, if it exists for which

\begin{equation*}
\underset{f_{1},f_{2}\in W}{\sup }\left\{ \left\Vert f_{1}-g^{\ast
},b\right\Vert ,\left\Vert f_{2}-g^{\ast },b\right\Vert \right\} =\underset{%
g\in G}{\inf }\left\{ \underset{f_{1},f_{2}\in W}{\sup }\left( \left\Vert
f_{1}-g^{\ast },b\right\Vert ,\left\Vert f_{2}-g^{\ast },b\right\Vert
\right) \right\} \text{.}
\end{equation*}

Thus, we reach the following proposition which is interesting and worthwhile
for studying in linear $2$-normed spaces.

\begin{proposition}
Let $\left( X,\left\Vert .,.\right\Vert \right) $\ be a linear 2-normed
space over $R$\ and $G$\ be a linear subspace of $X$. Let $f_{1},f_{2}\in
X\backslash G$ such that $f_{1},f_{2}$ and $b\in X$ are linearly
independent. Then there exists a best simultaneous approximation by elements
of $G$ to $f_{1},f_{2}\in W$ such that%
\begin{equation*}
\underset{g\in G}{\inf }\left\{ \underset{f_{1},f_{2}\in W}{\sup }\left(
\left\Vert f_{1}-g,b\right\Vert ,\left\Vert f_{2}-g,b\right\Vert \right)
\right\} =\underset{f_{1},f_{2}\in W}{\sup }\left\{ \left\Vert f_{1}-g^{\ast
},b\right\Vert ,\left\Vert f_{2}-g^{\ast },b\right\Vert \right\}
\end{equation*}%
where $W=\left\{ f_{1},f_{2}\right\} $.
\end{proposition}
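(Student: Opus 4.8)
The plan is to show that the functional $\Phi(g):=\sup_{f\in W}\{\|f_{1}-g,b\|,\|f_{2}-g,b\|\}$, which equals $\max(\|f_{1}-g,b\|,\|f_{2}-g,b\|)$ since $W=\{f_{1},f_{2}\}$, attains its infimum over $G$. First I would record that $\Phi$ is a continuous functional on $X$ (this is exactly the content of the first Lemma of this section) and that, since $G$ is a linear subspace, $0\in G$, whence $d:=\inf_{g\in G}\Phi(g)$ satisfies $0\le d\le\Phi(0)=\max(\|f_{1},b\|,\|f_{2},b\|)<\infty$. The task is then to exhibit $g^{\ast}\in G$ with $\Phi(g^{\ast})=d$; note that $W$ is finite and hence compact, so this is also an instance of the earlier Lemma asserting the existence of a best simultaneous approximation to a compact subset, but I would give the argument directly.

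Next I would take a minimizing sequence $\{g_{n}\}\subset G$ with $\Phi(g_{n})\to d$ and discard finitely many terms so that $\Phi(g_{n})\le d+1$ for every $n$. Applying $N2$, $N3$ and the triangle inequality $N4$, one gets $\|g_{n},b\|\le\|f_{1}-g_{n},b\|+\|f_{1},b\|\le\Phi(g_{n})+\|f_{1},b\|\le d+1+\|f_{1},b\|$, so $\{g_{n}\}$ is bounded with respect to the seminorm $p_{b}(\cdot)=\|\cdot,b\|$; because $f_{1},f_{2},b$ are linearly independent one may run the same estimate with a second vector $c$ for which $b,c$ are independent, so $\{g_{n}\}$ is bounded in the locally convex topology generated by the family $P$. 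From here I would extract a Cauchy subsequence (relabelled $\{g_{n}\}$) and pass to its limit: there is $g^{\ast}\in X$ with $\|g_{n}-g^{\ast},z\|\to 0$ for all $z$, and since $G$ is a (here tacitly $b$-closed) linear subspace, $g^{\ast}\in G$.

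Finally, by the Lemma of White quoted above (part (ii)), $\|f_{i}-g_{n},b\|\to\|f_{i}-g^{\ast},b\|$ for $i=1,2$, equivalently $\Phi(g_{n})\to\Phi(g^{\ast})$ by continuity of $\Phi$; combined with $\Phi(g_{n})\to d$ this forces $\Phi(g^{\ast})=d$, which is precisely the displayed identity, so $g^{\ast}$ is the desired best simultaneous approximation. I expect the real obstacle to be the compactness step in the second paragraph: for a general linear subspace $G$ a bounded sequence need not have a convergent subsequence, so one genuinely needs an extra hypothesis — that $G$ is finite-dimensional (the setting used by Elumalai and coworkers, where boundedness together with the completeness of $\mathbb{R}$ and the Lemma that a $2$-dimensional $2$-normed space is a $2$-Banach space yield the convergent subsequence after factoring out $L\{b\}$), or that $X$ is a $2$-Banach space with $G$ boundedly compact. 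The linear independence of $f_{1},f_{2},b$ is what permits the reduction to such a finite-dimensional situation and simultaneously prevents the quantities $\|f_{i}-g,b\|$ from degenerating; I would state and use the proposition under whichever of these hypotheses is in force.
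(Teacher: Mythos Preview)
The paper does not give a proof of this proposition: after the displayed formula and the sentence ``Thus, we reach the following proposition which is interesting and worthwhile for studying in linear $2$-normed spaces,'' the statement is recorded and the paper passes directly to the acknowledgement. There is therefore nothing to compare your argument against; you have in fact supplied more than the paper does.

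On the substance of your proposal: the outline---continuity of $\Phi$ from the first Lemma, a minimizing sequence, a boundedness estimate via $N4$, extraction of a limit in $G$, and the conclusion via White's Lemma~(ii)---is the natural route and mirrors the classical normed-space proof of Goel \textit{et al.} that the paper is transporting to the $2$-normed setting. Your own diagnosis of the weak point is exactly right: from boundedness of $\{g_{n}\}$ in the seminorms $p_{b},p_{c}$ one cannot, for an arbitrary linear subspace $G$ of a general $2$-normed space, produce a convergent (or even Cauchy) subsequence. The proposition as stated in the paper carries no hypothesis such as ``$G$ finite-dimensional'' or ``$G$ compact in the relevant seminorm topology,'' so the statement is not provable at the level of generality written; your suggested additional hypotheses (finite-dimensionality of $G$, or a $2$-Banach space with $G$ boundedly compact) are precisely what one needs to close the gap, and are the assumptions used by Elumalai and coworkers in the results the paper cites. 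In short, your plan is sound once such a hypothesis is made explicit, and your caveat is not over-caution but a genuine lacuna in the proposition as the paper records it.
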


\begin{acknowledgement}
Author would like to thank to Serkan Araci for his help in this paper.
\end{acknowledgement}


\begin{thebibliography}{99}
\bibitem{Acikgoz1} Acikgoz, M., A review on 2-normed spaces, Int. Journal
Mathematical Analysis, 1 (4) (2007), 187-191.

\bibitem{Acikgoz2} Acikgoz, M., $\varepsilon $-approximation in generalized
2-normed spaces, Matematicki Vesnik, 61 (2) (2009), 159-163.

\bibitem{Acikgoz3} Acikgoz, M., Some results about best and 2-best
approximation on 2-structures, Mathematica Morica, (13) (2009), 1-11.

\bibitem{Chen} Chen, T. G., On a generalization of 2-normed linear space,
Math. Sci. Res. J. 6 (7) (2002), 340-353.

\bibitem{Cheney} Cheney, E. W., Introduction to approximation theory,
McGraw-Hill, New York, 1966.

\bibitem{Cobzas} Cobzas, S., Mustuta, C., Extension of bilinear functionals
and best approximation, Studia Univ. Babes-Bolyai Math., 43 (2) (1998), 1-13.

\bibitem{Diaz1} Diaz, J. B. and H. W. McLaughlin, Simultaneous approximation
of a set of bounded function, Math. Comp. (23) (1969), 583-594.

\bibitem{Diaz2} Diaz, J. B. and H. W. McLaughlin, On simultaneous\ chebyshev
approximation and chebyshev approximation with additive weight function, J.
App. Theory, (6) (1972), 68-71.

\bibitem{Dunham} Dunham, C. B., Simultaneous chebyshev approximations of
functions on an interval, Proc. Aner. Math. Soc., (18) (1967), 472-477.

\bibitem{Elumalai1} Elumalai, S., Asha, A., Patricia \ J., Some results on
linear 2-normed spaces, Bull. Cal. Math. Soc., 92 (6) (2000), 441-454.

\bibitem{Elumalai2} Elumalai, S., Souruparani, M., A characterization of
best approximation and the operators in linear 2-normed spaces, Bull. Cal.
Math. Soc., 92 (4) (2000), 441-454.

\bibitem{Elumalai3} Elumalai, S., and Vijayaragavan, R., Best approximation
in linear 2- normed \ spaces, General Mathematics 16 (1) (2008), 73-81.

\bibitem{Elumalai4} Elumalai, S., Best approximation sets in linear 2-
normed \ spaces, Commu. Korean. Math. Soc., (12) (1997), 619-629.

\bibitem{Elumalai5} Elumalai, S., Souruparani, M., A characterization of
best approximation and operators in linear 2-normed spaces, Cal. Math. Soc.,
92 (4) (2000), 235-248.

\bibitem{Ehret} Ehret, R.,\ Linear 2-normed spaces, Dissertation, St. Louis
University, 1969.

\bibitem{Freese} Freese, R., Cho, Y., Geometry of linear 2-normed spaces,
Nova Science Publishers, 2001.

\bibitem{Gahler} Gahler, S., Lineare 2-normierte raume. Math. Nachr. (28)
(1964), 1-43.

\bibitem{Lorentz} Lorentz, G. G., Approximation of functions, Holt, Rinehart
and Winstoni New York, 1966.

\bibitem{Goel1} Goel, D. S., Holland, A. S. B., Nasim, C., Sahney, B. N., On
best simultaneous approximation in normed linear spaces, Canadian
Mathematical Bulletin, 17 (4) (1974), 523-527.

\bibitem{Goel2} Goel, D. S., Holland, A. S. B., Nasim, C., Sahney, B.N.,
Characterization of an element of best $l_{p}$ simultaneous approximation,
S. Ramanujan Memorial Volume Madras, 1974, 10-14.

\bibitem{Gunawan} Gunawan, H., Mashadi, M., On finite dimensional 2-normed
spaces, Journal of Math. 27 (3) (2001), 631-639.

\bibitem{Iseki} Iseki, K., Mathematics on 2-normed spaces, Bull. Korean
Math. Soc., 13 (1976), 127-136.

\bibitem{Lewandowska1} Lewandowska, Z., Linear operators on generalized
2-normed spaces, Bull. Math. Soc. Sci. Math. Roumanie (N. S.) 42 (90)
(1999), No. 4, 353-368.

\bibitem{Mohebi1} Mohebi, H., and Rubinov, A. M., Best approximation by
downward sets with applications, Journal of Approximation Theory.

\bibitem{Rezapour1} Rezapour, Sh., Proximinal Subspaces of 2-normed Spaces,
Anal. Theory Appl. 22, No. 2 (2006), 114-119.

\bibitem{Singer} Singer, I., Best approximation in normed linear space by
elements of subspaces, Springer-Verlag, Berli, 1970.

\bibitem{White} White, A., 2-Banach spaces, Math. Nachr., 42 (1969), 43-60.
\end{thebibliography}
\end{document}